\newtheorem{theorem}{Theorem}[section]
\newtheorem{corollary}[theorem]{Corollary}
\newtheorem{definition}[theorem]{Definition}
\newtheorem{example}[theorem]{Example}
\newtheorem{lemma}[theorem]{Lemma}
\newtheorem{remark}[theorem]{Remark}
\newenvironment{proof}[1][Proof]{\noindent\textbf{#1.} }{\ \rule{0.5em}{0.5em}}
\begin{document}

\title{Topology of certain symplectic conifold transitions of $\mathbb{C}%
P^{1}-$bundles}
\author{Yi Jiang \\
Mathematical Sciences Center, Tsinghua University, \\
Beijing 100084, China}
\maketitle

\begin{abstract}
In this paper, we prove the existence of certain symplectic conifold
transitions on all $\mathbb{C}P^{1}-$bundles over symplectic 4--manifolds,
which generalizes Smith, Thomas and Yau's examples of symplectic conifold
transitions on trivial $\mathbb{C}P^{1}-$bundles over K\"{a}hler surfaces.
Our main result is to determine the diffeomorphism types of such symplectic
conifold transitions of $\mathbb{C}P^{1}-$bundles. In particular, this
implies that in the case of trivial $\mathbb{C}P^{1}-$bundles over
projective surfaces, Smith, Thomas and Yau's examples of symplectic conifold
transitions are diffeomorphic to K\"{a}hler 3--folds.

\begin{description}
\item[2010 Mathematics subject classification:] 57R17 (57R22)

\item[Key Words and phrases:] symplectic conifold transitions, $\mathbb{C}%
P^{1}-$bundles, K\"{a}hler 3--folds, characteristic classes

\item[Email address:] yjiang117@mail.tsinghua.edu.cn
\end{description}
\end{abstract}

\section{Introduction}

In this paper, all manifolds under consideration are closed, oriented and
differentiable, unless otherwise stated. By a $\mathbb{C}P^{1}-$bundle, we
always mean the projectivization $\mathbb{P}\left( E\right) $ of a complex
vector bundle $E$ of rank two.

Symplectic conifold transitions introduced by Smith, Thomas and Yau \cite%
{STY} are symplectic surgeries on symplectic 6--manifolds which collapses
embedded Lagrangian 3--spheres and replaces them by symplectic 2--spheres.
One sufficient condition to realize such a symplectic surgery is the
existence of a nullhomology Lagrangian 3--sphere in the initial symplectic
6--manifold \cite[Theorem 2.9]{STY}. As a family of typical symplectic
6--manifolds constructed by Thurston \cite[ Theorem 6.3]{McS}, $\mathbb{C}%
P^{1}-$bundles over symplectic 4--manifolds can be considered as the initial
6--manifolds and one can study the existence of nullhomology Lagrangian
3--spheres in them. In trivial $\mathbb{C}P^{1}-$bundles over K\"{a}hler
surfaces, Smith, Thomas and Yau \cite{STY} found some nullhomology
Lagrangian 3--spheres and gave examples of symplectic conifold transitions
along these Lagrangian 3--spheres which will be called \textsl{canonical} in
our paper (see Definition \ref{def}). They pointed out that these examples
can produce 3--folds with arbitrarily high second Betti number which are not
obviously blowups of smooth 3--folds and it should be possible for them to
contain non--K\"{a}hler examples. Indeed, Corti and Smith \cite{CS} proved
that there is such a symplectic conifold transition of the trivial $\mathbb{C%
}P^{1}-$bundle over some Enriques surface which is not deformation
equivalent to any K\"{a}hler 3--fold.

However, our main results in this paper will imply that Smith, Thomas and
Yau's examples of symplectic conifold transitions of trivial $\mathbb{C}%
P^{1}-$bundles are diffeomorphic to either $\mathbb{C}P^{1}-$bundles or
blowups of $\mathbb{C}P^{1}-$bundles; in particular, Corti and Smith's
examples of symplectic conifold transitions are diffeomorphic to K\"{a}hler
3--folds. More generally, we find canonical Lagrangian 3--spheres in all $%
\mathbb{C}P^{1}-$bundles over symplectic 4--manifolds (see Lemma \ref{ml1})
and prove Theorem \ref{mt}, Corollary \ref{mc} below.

For simplicity, denote $\overline{%
\mathbb{C}
P^{2}}$ and $S^{4}$ by $N_{k}$, $k=1,2$, respectively, where $\overline{%
\mathbb{C}
P^{n}}$ denotes the complex projective space $%
\mathbb{C}
P^{n}$ with the opposite orientation. For $k=1,2$, let $\sigma _{k}\in
H_{2}(N_{k})$ and $\sigma _{k}^{\ast }\in H^{2}(N_{k})$ such that $\sigma
_{1}^{\ast }$ is the dual class of the preferred generator $\sigma _{1}$,
i.e. $\left\langle \sigma _{1}^{\ast },\sigma _{1}\right\rangle =1$, and $%
\sigma _{2}=0$, $\sigma _{2}^{\ast }=0$. Denote $\left[ M\right] $ for the
fundamental class of a manifold $M$. As there are exactly two distinct
conifold transitions along a Lagrangian 3--sphere up to diffeomorphism\cite%
{STY}, we can state our main results as following.

\begin{theorem}
\label{mt}Let $\mathbb{P}\left( E\right) $ be a symplectic manifold which is
the projectivization of a rank two complex vector bundle $E$ over a
4--manifold $N$. Suppose $\mathbb{P}\left( E\right) $ has a canonical
Lagrangian 3--sphere. Then the two symplectic conifold transitions of $%
\mathbb{P}\left( E\right) $ along this Lagrangian 3--sphere are
diffeomorphic to $\mathbb{P}(E_{1})$ and the connected sum $\mathbb{P}%
(E_{2})\sharp \overline{%
\mathbb{C}
P^{3}}$ respectively, where $E_{k}$, $k=1,2$ are the rank two complex
bundles over $N\sharp N_{k}$ with Chern classes satisfying%
\begin{eqnarray*}
c_{1}(E_{k}) &=&(c_{1}(E),-\sigma _{k}^{\ast })\in H^{2}(N\sharp N_{k})\cong
H^{2}(N)\oplus H^{2}(N_{k}); \\
\left\langle c_{2}(E_{k}),\left[ N\sharp N_{k}\right] \right\rangle 
&=&\left\langle c_{2}(E),\left[ N\right] \right\rangle -1.
\end{eqnarray*}%
Moreover, if $N$ is symplectic, then the above diffeomorphisms can be chosen
to preserve the first Chern classes.
\end{theorem}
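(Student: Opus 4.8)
The plan is to drop the symplectic structure at first and treat the conifold transition as a purely differentiable cut-and-paste, returning to the symplectic data only at the end to handle the first Chern class. The canonical hypothesis (Definition \ref{def}, Lemma \ref{ml1}) should localize the Lagrangian $S^3$: it carries a Weinstein neighborhood symplectomorphic to $T^*S^3$ whose image in the base $N$ lies inside a ball $B^4$ over which $\mathbb{P}(E)$ trivializes as $B^4\times\mathbb{C}P^1$, so that the transition alters $\mathbb{P}(E)$ only over a slightly larger ball. I would first write down this local model explicitly, identifying $T^*S^3$ with the smoothing $\{\sum z_i^2=\epsilon\}$ of the node, its boundary with $S^3\times S^2$, and the two small resolutions with two copies of the disk bundle $D(\mathcal{O}(-1)\oplus\mathcal{O}(-1))\to\mathbb{C}P^1$ glued in by the two gluing maps of $S^3\times S^2$ that are exchanged by the flop. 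This reduces the theorem to recognizing the two manifolds obtained from $\mathbb{P}(E)\setminus\nu(S^3)$ by inserting each resolution.

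Next I would build the two target manifolds by hand and exhibit diffeomorphisms. For the first transition the claim is that one insertion reassembles $\mathbb{P}(E)$ into the projectivization of a bundle over the blown-up base $\tilde N=N\sharp N_1=N\sharp\overline{\mathbb{C}P^2}$: away from $B^4$ nothing changes, so the complement agrees with $\mathbb{P}(E)|_{N\setminus B^4}$, while over $\mathrm{Bl}_pB^4$ the inserted $\mathbb{C}P^1$ is a ``base'' sphere projecting to the exceptional curve $\mathcal{E}$. I would check that the clutching function of the glued-in piece is exactly that of $\mathbb{P}(E_1)|_{\mathrm{Bl}_pB^4}$, so that the pieces assemble to $\mathbb{P}(E_1)$; the elementary modification concentrated over $p$ is what produces the change $c_1(E)\mapsto(c_1(E),-\sigma_1^\ast)$ and the drop of $c_2$. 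For the second transition, which is the flop of the first, I would instead recognize the resulting smooth manifold as the blow-up of the total space $\mathbb{P}(E_2)$ at a point: the complement together with the glued-in piece reassemble, up to diffeomorphism, as a point blow-up of a $\mathbb{C}P^1$-bundle over the unchanged base $N\sharp N_2=N\sharp S^4=N$. Since a point blow-up of a $6$-manifold is a connected sum with $\overline{\mathbb{C}P^3}$, this yields $\mathbb{P}(E_2)\sharp\overline{\mathbb{C}P^3}$.

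With the diffeomorphisms in place the Chern-class bookkeeping is routine. I would compute $c_1(E_k)$ and $\langle c_2(E_k),[N\sharp N_k]\rangle$ by a Mayer--Vietoris argument over the connected-sum splitting $H^2(N\sharp N_k)\cong H^2(N)\oplus H^2(N_k)$, using that both invariants agree with those of $E$ away from the surgery region, that the inserted $\mathcal{O}(-1)\oplus\mathcal{O}(-1)$ contributes the exceptional term $-\sigma_k^\ast$ to $c_1$, and that it lowers $\langle c_2,[\,\cdot\,]\rangle$ by $1$; the standard relation for $c(\mathbb{P}(E))$ in terms of the tautological class and $c_i(E)$ then delivers the stated values.

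Finally, for the refinement when $N$ is symplectic, $\mathbb{P}(E)$ and each $\mathbb{P}(E_k)$ carry canonical almost complex structures, and I would upgrade the diffeomorphisms to ones matching $c_1(T\mathbb{P}(E))$ with $c_1(T\mathbb{P}(E_k))$ by arranging the local models to be compatible with these complex structures across the surgery, so that the framings defining the glued-in pieces respect the first Chern classes. The step I expect to be the crux is the middle one: tracking the gluing map through the complement $\mathbb{P}(E)\setminus\nu(S^3)$ and matching it precisely to the clutching data of a globally defined projective bundle (respectively, of a point blow-up), thereby proving that each local replacement assembles into exactly the claimed manifold rather than some other manifold sharing its invariants. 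Pinning down the two distinct framings, and showing they correspond to the ``base'' blow-up versus the ``total-space'' point blow-up, is where the real content lies; the characteristic-class computation and the $c_1$-preservation are comparatively mechanical once the diffeomorphisms are established.
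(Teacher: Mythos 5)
Your reduction of the theorem to a local problem over a ball---using the canonical hypothesis to confine the surgery to a trivialized piece $B^{4}(l)\times \mathbb{C}P^{1}$ and then regluing into the unchanged complement---is exactly the skeleton of the paper's argument (this is the role of $M_{k}=B^{4}(l)\times \mathbb{C}P^{1}\cup _{id_{\partial }}Y_{k}$ and of the gluing formula for $Z_{k}$ in the proof of Theorem \ref{mt}). But the step you yourself flag as the crux---identifying the two reglued local pieces with $\mathbb{P}(E_{1}^{\prime })$ and with a point blow-up of $\mathbb{P}(E_{2}^{\prime })$ by ``matching clutching data'' on $S^{3}\times S^{2}$---is precisely what your proposal does not carry out, and no mechanism for carrying it out is offered. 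This is a genuine gap rather than a routine verification: the gluing map is $\overline{f}\circ \psi _{k}$ for the rather implicit symplectomorphisms $\psi _{k}$ of \cite{STY}, and recognizing a closed $6$--manifold from boundary identification data on $S^{3}\times S^{2}$ (whose mapping class group is nontrivial) is exactly the content of the theorem. The paper never attempts this direct identification. Instead it (a) uses the explicit $4$--dimensional submanifolds $\widehat{S}_{k}\subset W_{k}^{\epsilon }$ from the proof of \cite[Theorem 2.9]{STY} to build embeddings of $\mathbb{C}P^{2}\sharp N_{k}$ into $M_{k}$ (Lemma \ref{ml2}); (b) computes from these the full system of invariants of $M_{k}$---cohomology ring, $p_{1}$, $w_{2}$, $b_{3}$---via a surgery cobordism from $S^{2}\times S^{4}$ (Example \ref{ExM}); (c) invokes the Wall--Jupp classification of simply--connected $6$--manifolds with torsion--free homology to obtain an abstract diffeomorphism $M_{k}\cong \mathbb{P}(E_{k}^{\prime })\sharp Q_{k}$; and (d) upgrades it, via Haefliger's embedding theorem, uniqueness of tubular neighborhoods and $\pi _{2}(O(4))=0$, to a diffeomorphism whose restriction to $B^{4}(l)\times \mathbb{C}P^{1}$ is a local trivialization---which is what makes the final global regluing produce $\mathbb{P}(E_{k})\sharp Q_{k}$ rather than some other manifold. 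Without an input of type (c), or an honest computation of the gluing maps that you only promise, your middle step does not close.

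The ``moreover'' clause is likewise left as a declaration of intent. The paper does not arrange compatible local complex structures across the surgery; it proves $c_{1}$--preservation cohomologically, using that the conifold transition preserves $c_{1}$ as an almost complex operation, the blow-up formula for $c_{1}$, the identity $c_{1}(T\mathbb{P}(E))=2a+\pi ^{\ast }(c_{1}(TN)+c_{1}(E))$ from Example \ref{ExP}, and the explicit action of $\phi _{k}^{\ast }$ on the generators $a_{k}$ and $z^{\prime }$ recorded in the proof of Lemma \ref{ml}. Your instinct that the Chern-class bookkeeping is mechanical once the diffeomorphisms exist is correct; the problem is that the diffeomorphisms are never established.
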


\begin{remark}
For a 4--manifold $N$, every pair in $H^{2}(N)\times H^{4}(N)$ can be
realized as the Chern classes of a rank two complex vector bundles $E$ over $%
N$ and the isomorphism classes of the bundles $E_{k}$ in Theorem \ref{mt}
can be completely determined by $c_{1}(E_{k}),c_{2}(E_{k})$\cite[Theorem
1.4.20]{GS}. Moreover, it is not hard to prove the manifolds $\mathbb{P}%
(E_{1})$ and $\mathbb{P}(E_{2})\sharp \overline{%
\mathbb{C}
P^{3}}$ are in different diffeomorphism classes by comparing the cohomology
rings.

The existence of diffeomorphisms preserving the first Chern classes $c_{1}$
can be used to define an\ equivalence relation between symplectic
6--manifolds\cite[2.1(D)]{Sa}; for almost complex 6--manifolds, a
diffeomorphism preserving $c_{1}$ means it preserves the almost complex
structures\cite[Theorem 9]{W}.
\end{remark}

As it is well--known that the projectivization of a holomorphic vector
bundle over a K\"{a}hler manifold admits a K\"{a}hler structure\cite[%
Proposition 3.18]{V}, we can obtain the following corollary from Theorem \ref%
{mt}.

\begin{corollary}
\label{mc}Let $\mathbb{P}\left( E\right) $ be the projectivization of a rank
two holomorphic vector bundle $E$ over a projective surface $N$, then the
symplectic conifold transitions of $\mathbb{P}\left( E\right) $ along a
canonical Lagrangian 3--sphere are diffeomorphic to K\"{a}hler 3--folds.
\end{corollary}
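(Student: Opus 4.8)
The plan is to feed Theorem~\ref{mt} into the classical fact, cited above as \cite[Proposition 3.18]{V}, that the projectivization of a holomorphic bundle over a K\"{a}hler base is K\"{a}hler. By Theorem~\ref{mt}, the two symplectic conifold transitions of $\mathbb{P}(E)$ along the canonical Lagrangian $3$--sphere are diffeomorphic to $\mathbb{P}(E_{1})$ and to $\mathbb{P}(E_{2})\sharp \overline{\mathbb{C}P^{3}}$, where $E_{1},E_{2}$ are rank two complex bundles over $N\sharp N_{1}$ and $N\sharp N_{2}$ with the Chern classes listed there. First I would identify the two base surfaces. Since $N_{1}=\overline{\mathbb{C}P^{2}}$, the manifold $N\sharp N_{1}$ is orientation--preservingly diffeomorphic to the blow--up of $N$ at one point, which is again a projective surface; since $N_{2}=S^{4}$ is the identity for connected sum in dimension four, $N\sharp N_{2}=N$ is projective. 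Thus both bases are projective surfaces.

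The central step is to realize $E_{1}$ and $E_{2}$ by holomorphic bundles. By \cite[Theorem 1.4.20]{GS} a rank two complex bundle over a $4$--manifold is determined up to isomorphism by its Chern classes, so it suffices to produce holomorphic rank two bundles carrying the Chern classes of $E_{1}$ and $E_{2}$. The first thing to check is that each $c_{1}(E_{k})$ is of type $(1,1)$, hence algebraic: the component $c_{1}(E)$ is algebraic because $E$ is holomorphic over the projective surface $N$, and the extra summand $-\sigma_{1}^{\ast}\in H^{2}(N_{1})$ is, under the blow--up identification, an integral multiple of the class of the exceptional divisor, which is algebraic. With the algebraic class $c_{1}:=c_{1}(E_{k})$ fixed, any prescribed integer value of $c_{2}$ is realized by a holomorphic rank two bundle on a projective surface: I would write $c_{1}=D_{1}+D_{2}$ as a sum of two algebraic classes with $D_{1}\cdot D_{2}$ arbitrarily negative (possible because $b\mapsto c_{1}\cdot b-b^{2}$ takes arbitrarily negative values on the group of algebraic classes), and then build $V$ as an extension of $\mathcal{O}(D_{2})\otimes\mathcal{I}_{Z}$ by $\mathcal{O}(D_{1})$, with $Z$ a reduced set of $c_{2}-D_{1}\cdot D_{2}\geq 0$ points chosen so that $V$ is locally free (Serre's construction). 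Then $c_{1}(V)=c_{1}$ and $c_{2}(V)=c_{2}$, so taking $c_{2}=\langle c_{2}(E_{k}),[N\sharp N_{k}]\rangle$ yields holomorphic bundles topologically isomorphic to $E_{1}$ and $E_{2}$.

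To finish, I would apply \cite[Proposition 3.18]{V}: $\mathbb{P}(E_{1})$ and $\mathbb{P}(E_{2})$, being projectivizations of holomorphic bundles over projective (hence K\"{a}hler) surfaces, are K\"{a}hler $3$--folds. For the second transition, I would use that for a complex $3$--fold $X$ the blow--up of $X$ at a point is diffeomorphic to $X\sharp \overline{\mathbb{C}P^{3}}$ and remains K\"{a}hler when $X$ is; applying this to $X=\mathbb{P}(E_{2})$ shows that $\mathbb{P}(E_{2})\sharp \overline{\mathbb{C}P^{3}}$ is diffeomorphic to a K\"{a}hler $3$--fold. Hence both symplectic conifold transitions are diffeomorphic to K\"{a}hler $3$--folds.

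The main obstacle is the realization step of the second paragraph: one must genuinely confirm that $c_{1}(E_{k})$ is algebraic---the only place where the geometry of the blow--up and the exceptional divisor enters---and that the required value $\langle c_{2}(E),[N]\rangle-1$ of $c_{2}$, which need be neither large nor positive, is actually attained by a \emph{locally free} sheaf. The remaining inputs---that blowing up a projective surface stays projective, that blowing up a K\"{a}hler $3$--fold at a point stays K\"{a}hler, and the diffeomorphism between a one--point blow--up and the connected sum with $\overline{\mathbb{C}P^{3}}$---are standard and can be quoted.
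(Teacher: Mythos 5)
Your proposal is correct and follows essentially the same route as the paper: both feed Theorem \ref{mt} into \cite[Proposition 3.18]{V}, reduce the second transition to the fact that a one--point blow--up of a K\"{a}hler $3$--fold is K\"{a}hler and diffeomorphic to the connected sum with $\overline{\mathbb{C}P^{3}}$, and identify the crux as realizing $E_{1},E_{2}$ holomorphically, which in both arguments comes down to $c_{1}(E_{k})$ being of type $(1,1)$ on the projective surface $N\sharp N_{k}$. The only divergence is that the paper simply quotes Schwarzenberger's theorem (a rank--two topological bundle on a projective surface admits a holomorphic structure iff $c_{1}\in H^{1,1}$, combined with the Lefschetz $(1,1)$ theorem), whereas you re--derive this existence statement by hand via Serre's construction --- which works, but you should make explicit the Cayley--Bacharach condition guaranteeing that the extension is locally free (arranged, e.g., by taking $D_{1}-D_{2}$ positive enough that the relevant $H^{0}$ vanishes), a point the citation would have absorbed.
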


We will finish the proof of Theorem \ref{mt} and Corollary \ref{mc} in
Section 3.3. In the course of establishing Theorem \ref{mt}, we also compute
the topological invariants of $\mathbb{C}P^{1}-$bundles over
simply--connected 4--manifolds in Example \ref{ExP}. According to \cite{W}
and \cite{V}, combining this computation with Theorem \ref{mt} will give
diffeomorphism classification of symplectic conifold transitions of
simply--connected $\mathbb{C}P^{1}-$bundles along canonical Lagrangian
3--spheres.

\section{Symplectic conifold transitions on $\mathbb{C}P^{1}-$bundles}

We first recall the definition of conifold transitions \cite{STY}. Begin
with a Lagrangian embedding $f:S^{3}\rightarrow X$ in a symplectic
6--manifold $X$. By the Lagrangian neighborhood theorem \cite[Theorem 3.33]%
{McS}, the embedding $f$ can extend to a symplectic embedding $f^{\prime
}:T_{\epsilon }^{\ast }S^{3}\rightarrow X$ with $T_{\epsilon }^{\ast
}S^{3}\subset T^{\ast }S^{3}$ a neighborhood of the zero section of the
cotangent bundle. Define a \textit{conifold transition} along $f$ to be the
smooth manifold%
\begin{equation*}
Y_{k}:=X\backslash f[S^{3}]\cup _{f^{\prime }\circ \psi _{k}}W_{k}^{\epsilon
}
\end{equation*}%
for $k=1,2$, where $W_{k}$ are two small resolutions of the complex
singularity $W=\left\{ \sum z_{j}^{2}=0\right\} \subset 
\mathbb{C}
^{4}$ with exceptional set $\mathbb{C}P^{1}$ over $\left\{ 0\right\} \in W$
and either of $W_{k}$ is a complex vector bundle over $\mathbb{C}P^{1}$ with
first Chern number $-2$; fixing coordinates on $T^{\ast }S^{3}$ as%
\begin{equation*}
T^{\ast }S^{3}=\left\{ \left( u,v\right) \in 
\mathbb{R}
^{4}\times 
\mathbb{R}
^{4}|\left\vert u\right\vert =1,\left\langle u,v\right\rangle =0\right\} 
\text{,}
\end{equation*}%
the maps $\psi _{k}:(W_{k}\backslash \mathbb{C}P^{1}\cong W\backslash
\{0\},\omega _{%
\mathbb{C}
})\rightarrow (T^{\ast }S^{3}\backslash \{v=0\},d(vdu))$ are
symplectomorphisms defined in \cite[(2.1)]{STY} with $\omega _{%
\mathbb{C}
}$ the restriction of the symplectic form $\frac{i}{2}\sum_{j}dz_{j}\wedge d%
\overline{z_{j}}$ on $%
\mathbb{C}
^{4}$; the submanifolds $W_{k}^{\epsilon }\subset W_{k}$ are neighborhoods
of the exceptional set $\mathbb{C}P^{1}$ such that $W_{k}^{\epsilon
}\backslash \mathbb{C}P^{1}=\psi _{k}^{-1}[T_{\epsilon }^{\ast
}S^{3}\backslash \{v=0\}]$.

There are more choices in conifold transitions along a Lagrangian 3--sphere
than along a Lagrangian embedding $S^{3}\rightarrow X$, as changing the
orientation of the Lagrangian 3--sphere $f[S^{3}]$ would induce a new
Lagrangian embedding $S^{3}\rightarrow X$ different from $f$. However, this
change would just swap the diffeomorphism types of the conifold transitions,
so there are exactly two distinct conifold transitions $Y_{k}$, $k=1,2$
along the Lagrangian 3--sphere $f[S^{3}]$ up to diffeomorphism\cite{STY}. It
follows from \cite[Theorem 2.9]{STY} that the two conifold transitions of a
symplectic 6--manifold along a nullhomology Lagrangian 3--sphere both admit
distinguished symplectic structures. Hence to realize such symplectic
conifold transitions on $\mathbb{C}P^{1}-$bundles, it suffices to find
nullhomology Lagrangian 3--spheres.

Inside the product $(%
\mathbb{C}
^{2}\times \mathbb{C}P^{1},\omega _{0}\times \omega _{\mathbb{C}P^{1}})$ of
symplectic manifolds with $\omega _{0}=\frac{i}{2}\sum_{j}dz_{j}\wedge d%
\overline{z_{j}}$ on $%
\mathbb{C}
^{2}$ and $\omega _{\mathbb{C}P^{1}}$ the Fubini--Study form on $\mathbb{C}%
P^{1}$, a well--known construction\cite{ALP} of a nullhomology Lagrangian
3--sphere is given by the composition of maps%
\begin{equation}
f:S^{3}\overset{(i,h)}{\rightarrow }%
\mathbb{C}
^{2}\times \mathbb{C}P^{1}\overset{\iota \times id_{\mathbb{C}P^{1}}}{%
\rightarrow }%
\mathbb{C}
^{2}\times \mathbb{C}P^{1}  \tag{1.1}  \label{mapf}
\end{equation}%
where $i:S^{3}\subset 
\mathbb{C}
^{2}$ is the inclusion of the unit sphere, $h:S^{3}\rightarrow \mathbb{C}%
P^{1}$ is the Hopf map and $\iota $ is the complex conjugation on $%
\mathbb{C}
^{2}$. As the image $f[S^{3}]$ entirely contains in $B^{4}(l)\times \mathbb{C%
}P^{1}$ with $B^{4}(l)$ a ball in $%
\mathbb{C}
^{2}$ of radius $l>1$, hence finding symplectic embeddings of $%
B^{4}(l)\times \mathbb{C}P^{1}$ in $\mathbb{C}P^{1}-$bundles would give
nullhomology Lagrangian 3--spheres in the bundles. This may lead to the
following definition.

\begin{definition}
\label{def} Let $\mathbb{P}\left( E\right) $ be a symplectic manifold which
is a $\mathbb{C}P^{1}-$bundle over a 4--manifold $N$. A Lagrangian 3--sphere
in $\mathbb{P}\left( E\right) $ is called \textsl{canonical} if it is the
image of the composition of embeddings%
\begin{equation*}
S^{3}\overset{f}{\rightarrow }B^{4}(l)\times \mathbb{C}P^{1}\overset{\eta }{%
\rightarrow }\mathbb{P}\left( E\right)
\end{equation*}%
for $l>1$ where the symplectic embedding $\eta $ can induce a local
trivialization of the bundle $\pi :\mathbb{P}\left( E\right) \rightarrow N$,
i.e. there is a differentiable embedding $k:B^{4}(l)\rightarrow N$ such that 
\begin{equation*}
\pi ^{-1}[k[B^{4}(l)]]=\eta \lbrack B^{4}(l)\times \mathbb{C}P^{1}]\overset{%
\eta ^{-1}}{\rightarrow }B^{4}(l)\times \mathbb{C}P^{1}\overset{k\times id_{%
\mathbb{C}P^{1}}}{\rightarrow }k[B^{4}(l)]\times \mathbb{C}P^{1}
\end{equation*}%
is a local trivialization of the $\mathbb{C}P^{1}-$bundle $\mathbb{P}\left(
E\right) $.
\end{definition}

\cite{STY} and \cite{CS} have shown the existence of canonical Lagrangian
3--spheres in trivial $\mathbb{C}P^{1}-$bundles over K\"{a}hler surfaces. We
generalize their result in the following Lemma by Thurston's construction 
\cite[Theorem 6.3]{McS} and the construction of K\"{a}hler forms on $\mathbb{%
P}\left( E\right) $\cite[Proposition 3.18]{V}.

\begin{lemma}
\label{ml1}Let $\mathbb{P}\left( E\right) $ be the projectivization of a
rank two complex vector bundle $E$ over a symplectic 4--manifold $N$. Then $%
\mathbb{P}\left( E\right) $ admits a symplectic form such that it has a
embedded canonical Lagrangian 3--sphere. Moreover, if $N$ is K\"{a}hler and $%
E$ admits a holomorphic structure, then $\mathbb{P}\left( E\right) $ admits
a K\"{a}hler form such that it has a embedded canonical Lagrangian
3--sphere.\medskip 

\begin{proof}
It suffices to find a symplectic embedding $\eta :B^{4}(l)\times \mathbb{C}%
P^{1}\rightarrow \mathbb{P}\left( E\right) $ which can induce a local
trivialization with $l>1$. The keypoint is to note that there exists a
system of local trivializations $\left\{ (U_{j},\phi _{j})\right\} _{j=0}^{m}
$ of the bundle $\pi :\mathbb{P}\left( E\right) \rightarrow N$ and a
partition of unity $\rho _{j}:N\rightarrow \lbrack 0,1]$ subordinating to
the open cover $\left\{ U_{j}\right\} _{j=0}^{m}$ of $N$ such that each $%
U_{j}$ is contractible and $\rho _{0}\equiv 1$ on a nonempty open subset $%
V\subset U_{0}$. In fact, this follows easily from \cite[Corollary 5.2]{BT} .

For the symplectic case, apply Thurston's construction of the symplectic
form to $\mathbb{P}\left( E\right) $. Let $L^{\ast }$ denote the dual bundle
of the tautological line bundle $L=\left\{ \left( l,v\right) \in \mathbb{P}%
\left( E\right) \times E\text{ }|\text{ }v\in l\right\} $ over $\mathbb{P}%
\left( E\right) $. According to the proof of \cite[ Theorem 6.3]{McS}, the
first Chern class $c_{1}(L^{\ast })\in H^{2}(\mathbb{P}\left( E\right) )$,
the local trivializations $\left\{ (U_{j},\phi _{j})\right\} _{j=0}^{m}$ and
the partition of unity $\rho _{j}:N\rightarrow \lbrack 0,1]$ can contribute
to define a closed 2--form $\tau $ on $\mathbb{P}\left( E\right) $ such that
the restriction of $\tau $ on each fiber $\mathbb{C}P^{1}$ is just $\omega _{%
\mathbb{C}P^{1}}$. Moreover, since $\rho _{0}\equiv 1$ on $V$, then the form 
$\tau $ can be chosen such that its restriction on $\pi ^{-1}[V]$ is equal
to the pullback $\phi _{0}^{\ast }0\times \omega _{\mathbb{C}P^{1}}$ of the
form $0\times \omega _{\mathbb{C}P^{1}}$ on $V\times \mathbb{C}P^{1}$. By 
\cite[ Theorem 6.3]{McS}, the 2--form $\tau +\lambda \pi ^{\ast }\omega _{N}$
on $\mathbb{P}\left( E\right) $ is symplectic for $\lambda >0$ sufficiently
large where $\omega _{N}$ denotes the symplectic form on $N$. By the Darboux
neighborhood theorem, there is always a symplectic embedding $%
B^{4}(l)\rightarrow (V,\lambda \omega _{N})$ with $l>1$ for $\lambda $
sufficiently large. So in this case, we have the composition of symplectic
embeddings%
\begin{equation*}
B^{4}(l)\times \mathbb{C}P^{1}\rightarrow (V\times \mathbb{C}P^{1},\lambda
\omega _{N}\times \omega _{\mathbb{C}P^{1}})\overset{\phi _{0}^{-1}}{%
\rightarrow }(\mathbb{P}\left( E\right) ,\tau +\lambda \pi ^{\ast }\omega
_{N})
\end{equation*}%
which is the desired embedding.

Now for the K\"{a}hler case, assume $\omega _{N}$ is the K\"{a}hler form on $%
N$ and $E$ is holomorphic. Using the system of local trivializations $%
\left\{ (U_{j},\varphi _{j})\right\} _{j=0}^{m}$ of $E$ associated to $%
\left\{ (U_{j},\phi _{j})\right\} _{j=0}^{m}$ and the partition of unity $%
\rho _{j}$, we can obtain a Hermitian metric $h$ on $E$ such that on the
restriction $E|_{V}$ of $E$ to $V$, the metric $h$ is induced by the
canonical Hermitian metric on $%
\mathbb{C}
^{2}$ via the projection $E|_{V}\overset{\varphi _{0}}{\rightarrow }V\times 
\mathbb{C}
^{2}\rightarrow 
\mathbb{C}
^{2}$. \cite[Proposition 3.18]{V} shows that $h$ induces a Hermitian metric
on the bundle $L^{\ast }$ and the Chern form $\omega _{E}$ associated to
this metric can contribute to obtain a K\"{a}hler form $\omega _{E}+\lambda
\pi ^{\ast }\omega _{N}$ for $\lambda >0$ sufficiently large. Replacing $%
\tau $ by $\omega _{E}$ in proof of the symplectic case, we can get the
desired symplectic embedding. This completes the proof.
\end{proof}
\end{lemma}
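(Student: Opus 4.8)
The plan is to reduce the lemma to a single geometric construction, as Definition \ref{def} already suggests: to exhibit a canonical Lagrangian 3--sphere it suffices to produce, for some $l>1$, a symplectic embedding $\eta:B^{4}(l)\times\mathbb{C}P^{1}\rightarrow\mathbb{P}(E)$ whose image is a full fibre preimage $\pi^{-1}[k[B^{4}(l)]]$ and which is compatible with the projection in the sense of the definition. Once such an $\eta$ is in hand, precomposing with the standard map $f$ of (\ref{mapf}) --- whose image lies inside $B^{4}(l)\times\mathbb{C}P^{1}$ precisely because $B^{4}(l)$ contains the unit sphere $S^{3}\subset\mathbb{C}^{2}$ for $l>1$ --- yields the desired embedded canonical Lagrangian 3--sphere. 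So the whole content of the lemma is the existence of a symplectic (respectively Kähler) form on $\mathbb{P}(E)$ that is an honest product $\lambda\omega_{N}\times\omega_{\mathbb{C}P^{1}}$ over a small Darboux chart in the base.

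For the symplectic statement I would invoke Thurston's fibre-bundle construction. First fix a finite atlas of contractible trivializing charts $\{(U_{j},\phi_{j})\}_{j=0}^{m}$ with a subordinate partition of unity $\{\rho_{j}\}$, arranging via paracompactness (e.g. \cite[Corollary 5.2]{BT}) that $\rho_{0}\equiv1$ on some nonempty open $V\subset U_{0}$, so that only the chart $\phi_{0}$ contributes over $V$. Using the fibrewise Fubini--Study class $c_{1}(L^{\ast})$ together with these data, Thurston's argument produces a closed 2--form $\tau$ restricting to $\omega_{\mathbb{C}P^{1}}$ on each fibre, and the normalization $\rho_{0}\equiv1$ on $V$ forces $\tau$ to agree over $\pi^{-1}[V]$ with the pure product $\phi_{0}^{\ast}(0\times\omega_{\mathbb{C}P^{1}})$. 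Then $\tau+\lambda\pi^{\ast}\omega_{N}$ is symplectic once $\lambda$ is large, and over $V$ it equals $\lambda\omega_{N}\times\omega_{\mathbb{C}P^{1}}$ in the trivialization $\phi_{0}$. A Darboux chart at a point of $V$ embeds $B^{4}(l)$ symplectically into $(V,\lambda\omega_{N})$, and the radius $l>1$ is attained by taking $\lambda$ large, since scaling $\omega_{N}$ by $\lambda$ scales the radius of any Darboux ball by $\sqrt{\lambda}$. Taking the product with $\mathrm{id}_{\mathbb{C}P^{1}}$ and composing with $\phi_{0}^{-1}$ delivers the required $\eta$.

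For the Kähler statement the only change is to replace Thurston's $\tau$ by a Chern form, following \cite[Proposition 3.18]{V}. Using the trivializations $(U_{j},\varphi_{j})$ of $E$ and the same partition of unity, I would build a Hermitian metric $h$ on $E$ that is the standard metric of $\mathbb{C}^{2}$ over $V$ (again because $\rho_{0}\equiv1$ there); this induces a metric on $L^{\ast}$ whose Chern form $\omega_{E}$ restricts over $\pi^{-1}[V]$ to $0\times\omega_{\mathbb{C}P^{1}}$. Since $\omega_{E}$ is positive along the fibres and $\omega_{N}$ is Kähler on the base, $\omega_{E}+\lambda\pi^{\ast}\omega_{N}$ is Kähler for $\lambda$ large and is the Kähler product $\lambda\omega_{N}\times\omega_{\mathbb{C}P^{1}}$ over $V$; the same Darboux-and-product construction then produces $\eta$.

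The main obstacle, and the step deserving the most care, is making the global form a genuine product over the base chart rather than merely a form whose fibrewise restriction is $\omega_{\mathbb{C}P^{1}}$. This is exactly where the normalization $\rho_{0}\equiv1$ on $V$ is essential: without it the horizontal and mixed components of $\tau$ (or $\omega_{E}$) coming from the other charts would survive over $V$, and $\phi_{0}^{-1}\circ(k\times\mathrm{id})$ would fail to be symplectic. Everything else --- positivity for large $\lambda$, the Darboux embedding, and the bound $l>1$ --- is routine once the product structure over $V$ is secured.
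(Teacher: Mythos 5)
Your proposal is correct and follows essentially the same route as the paper: the same reduction to finding a symplectic embedding $\eta$ of $B^{4}(l)\times\mathbb{C}P^{1}$ inducing a local trivialization, the same use of a partition of unity with $\rho_{0}\equiv 1$ on $V$ so that Thurston's form $\tau$ becomes a genuine product over $\pi^{-1}[V]$, the same Darboux-plus-large-$\lambda$ argument to achieve radius $l>1$, and the same substitution of the Chern form $\omega_{E}$ for $\tau$ in the K\"{a}hler case. Your added remark that rescaling $\omega_{N}$ by $\lambda$ scales Darboux balls by $\sqrt{\lambda}$ is a nice explicit justification of a step the paper leaves implicit.
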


\section{Topology of symplectic conifold transitions of $\mathbb{C}P^{1}-$%
bundles}

The aim of this section is to study the topology of symplectic conifold
transitions of $\mathbb{C}P^{1}-$bundles along canonical Lagrangian
3--spheres and prove Theorem \ref{mt} and Corollary \ref{mc}. For this
purpose, we first recall in Section 3.1 the invariants of simply--connected
6--manifolds with torsion--free homology, and compute the invariants of $%
\mathbb{C}P^{1}-$bundles over simply--connected 4--manifolds; then determine
in Section 2 the topology of conifold transitions of $B^{4}(l)\times \mathbb{%
C}P^{1}$ along $f[S^{3}]$, i.e. to establish Lemma \ref{ml}, which is a
keypoint to prove Theorem \ref{mt}.

\subsection{Invariants of simply--connected 6--manifolds with torsion--free
homology}

By Wall\cite{W} and Jupp\cite{V}, the third Betti number $b_{3}$, the
integral cohomology ring $H^{\ast }$, the first Pontrjagin class $p_{1}$ and
the second Whitney--Stiefel class $w_{2}$ form a system of invariants, which
can distinguish all diffeomorphism classes of simply--connected 6--manifolds
with torsion--free homology. As an example, we will compute these invariants
for $\mathbb{C}P^{1}-$bundles over simply--connected 4--manifolds.

\begin{example}
\label{ExP} Let $\pi :\mathbb{P}\left( E\right) \rightarrow N$ be the
projectivization of a rank two complex vector bundle $E$ over a
simply--connected 4--manifold. Then $\mathbb{P}\left( E\right) $ has a
natural orientation which is compatible with that of the base and fibers. By
the homotopy exact sequence and Gysin sequence, the 6--manifold $\mathbb{P}%
\left( E\right) $ is a simply--connected with $b_{3}=0$. The cohomology ring
and the characteristic classes $w_{2}$, $p_{1}$ can be computed as follows. 
\newline
\textbf{(i)}By the definition of Chern classes\cite[Section 20]{BT}, we have 
\begin{equation*}
H^{\ast }(\mathbb{P}\left( E\right) )\cong H^{\ast }(N)[a]/\left\langle
a^{2}+\pi ^{\ast }c_{1}(E)\cdot a+\pi ^{\ast }c_{2}(E)\right\rangle
\end{equation*}%
where $a=c_{1}(L^{\ast })$ with $L^{\ast }$ the dual bundle of the
tautological line bundle $L=\left\{ \left( l,v\right) \in \mathbb{P}\left(
E\right) \times E\text{ }|\text{ }v\in l\right\} $ over $\mathbb{P}\left(
E\right) $. Let $\left\{ y_{i}\right\} $ be a basis of the free $%
\mathbb{Z}
-$module $H^{2}(N)$ and then $\left\{ a,\pi ^{\ast }y_{i}\right\} $ forms a
basis of $H^{2}(\mathbb{P}\left( E\right) )$. By the relations $a^{2}+\pi
^{\ast }c_{1}(E)\cdot a+\pi ^{\ast }c_{2}(E)=0$ and $\left\langle \left[ N%
\right] ^{\ast }\cup a,\left[ \mathbb{P}\left( E\right) \right]
\right\rangle =1$ with $\left[ N\right] ^{\ast }\in H^{4}(N)$ satisfying $%
\left\langle \left[ N\right] ^{\ast },\left[ N\right] \right\rangle =1$, we
can obtain 
\begin{eqnarray*}
\left\langle a^{3},\left[ \mathbb{P}\left( E\right) \right] \right\rangle
&=&\left\langle c_{1}(E)^{2}-c_{2}(E),\left[ N\right] \right\rangle ; \\
\left\langle a^{2}\cup \pi ^{\ast }y_{i},\left[ \mathbb{P}\left( E\right) %
\right] \right\rangle &=&-\left\langle c_{1}(E)y_{i},\left[ N\right]
\right\rangle ; \\
\left\langle a\cup \pi ^{\ast }y_{i}\cup \pi ^{\ast }y_{j},\left[ \mathbb{P}%
\left( E\right) \right] \right\rangle &=&\left\langle y_{i}y_{j},\left[ N%
\right] \right\rangle ; \\
\left\langle \pi ^{\ast }y_{i}\cup \pi ^{\ast }y_{j}\cup \pi ^{\ast }y_{k}, 
\left[ \mathbb{P}\left( E\right) \right] \right\rangle &=&0.
\end{eqnarray*}%
\textbf{(ii)}As the tautological line bundle $L$ is a subbundle of the
pullback $\pi ^{\ast }E$ and a Hermitian metric on $E$ pulls back to a
Hermitian metric on $\pi ^{\ast }E$, we have a splitting $\pi ^{\ast
}E=L\oplus L^{\perp }$ where $L^{\perp }$ is the orthogonal complement
bundle of $L$ and hence 
\begin{eqnarray*}
T\mathbb{P}\left( E\right) &\cong &\pi ^{\ast }TN\oplus Hom_{%
\mathbb{C}
}(L,L^{\perp })\text{\cite[Theorem 14.10]{MiS}}; \\
Hom_{%
\mathbb{C}
}(L,L^{\perp })\oplus \varepsilon _{%
\mathbb{C}
}^{1} &\cong &L^{\ast }\otimes \pi ^{\ast }E
\end{eqnarray*}%
with $\varepsilon _{%
\mathbb{C}
}^{1}$ the trivial complex line bundle. These isomorphisms, together with
the relations $a^{2}+\pi ^{\ast }c_{1}(E)\cdot a+\pi ^{\ast }c_{2}(E)=0$, $%
p_{1}=c_{1}{}^{2}-2c_{2}$ and%
\begin{equation*}
c_{1}(L_{1}\otimes L_{2})=2c_{1}(L_{1})+c_{1}(L_{2});c_{2}(L_{1}\otimes
L_{2})=c_{1}(L_{1})^{2}+c_{1}(L_{1})c_{1}(L_{2})+c_{2}(L_{2})
\end{equation*}%
with $L_{i}$ a complex vector bundle of rank $i=1,2$\cite[Problem 16-B]{MiS}%
, imply%
\begin{eqnarray*}
w_{2}(T\mathbb{P}\left( E\right) ) &\equiv &\pi ^{\ast }(w_{2}(TN)+w_{2}(E));
\\
p_{1}(T\mathbb{P}\left( E\right) ) &=&\pi ^{\ast
}(p_{1}(TN)+c_{1}(E)^{2}-4c_{2}(E)).
\end{eqnarray*}%
Thus we have%
\begin{eqnarray*}
\left\langle a^{2}\cup w_{2}(T\mathbb{P}\left( E\right) ),\left[ \mathbb{P}%
\left( E\right) \right] \right\rangle &=&\left\langle w_{2}(E)\cup
(w_{2}(E)+w_{2}(TN)),\left[ \mathbb{P}\left( E\right) \right] \right\rangle ;
\\
\left\langle a\cup \pi ^{\ast }y_{i}\cup w_{2}(T\mathbb{P}\left( E\right) ),%
\left[ \mathbb{P}\left( E\right) \right] \right\rangle &=&\left\langle
y_{i}\cup (w_{2}(E)+w_{2}(TN)),\left[ \mathbb{P}\left( E\right) \right]
\right\rangle ; \\
\left\langle \pi ^{\ast }y_{i}\cup \pi ^{\ast }y_{j}\cup w_{2}(T\mathbb{P}%
\left( E\right) ),\left[ \mathbb{P}\left( E\right) \right] \right\rangle
&=&0. \\
\left\langle a\cup p_{1}(T\mathbb{P}\left( E\right) ),\left[ \mathbb{P}%
\left( E\right) \right] \right\rangle &=&3\sigma (N)+\left\langle
c_{1}(E)^{2}-4c_{2}(E),\left[ N\right] \right\rangle \\
\left\langle \pi ^{\ast }y_{i}\cup p_{1}(T\mathbb{P}\left( E\right) ),\left[ 
\mathbb{P}\left( E\right) \right] \right\rangle &=&0
\end{eqnarray*}%
where $\sigma (N)$ is the signature of the 4--manifold $N$\cite[SIGNATURE
THEOREM 19.4]{MiS}.
\end{example}

\subsection{Topology of conifold transitions of $B^{4}(l)\times \mathbb{C}%
P^{1}$ along $f[S^{3}]$}

It is easy to see that the definition of conifold transitions can extend to
symplectic manifolds with boundaries. In this subsection we will prove Lemma %
\ref{ml}, determining the topology of $Y_{k}$, $k=1,2$, which denote the two
conifold transitions of $B^{4}(l)\times \mathbb{C}P^{1}$ along the
Lagrangian embedding $f:S^{3}\rightarrow B^{4}(l)\times \mathbb{C}P^{1}$ in (%
\ref{mapf}).

As in Section 1, denote $\overline{%
\mathbb{C}
P^{2}}$ and $S^{4}$ by $N_{k}$, $k=1,2$, respectively. Let $\sigma _{k}\in
H_{2}(N_{k})$ and $\sigma _{k}^{\ast }\in H^{2}(N_{k})$ such that $\sigma
_{1}^{\ast }$ is the dual class of the preferred generator $\sigma _{1}$ and 
$\sigma _{2}=0$, $\sigma _{2}^{\ast }=0$. As $\partial Y_{k}=\partial
B^{4}(l)\times \mathbb{C}P^{1}$, the lemma can be stated as following.

\begin{lemma}
\label{ml}Let $id_{\partial }$ denote the identity map of $\partial
Y_{k}=\partial B^{4}(l)\times \mathbb{C}P^{1}$. Then there are two
diffeomorphisms%
\begin{eqnarray*}
\phi _{1} &:&B^{4}(l)\times \mathbb{C}P^{1}\cup _{id_{\partial
}}Y_{1}\rightarrow \mathbb{P}(E_{1}^{\prime }); \\
\phi _{2} &:&B^{4}(l)\times \mathbb{C}P^{1}\cup _{id_{\partial
}}Y_{2}\rightarrow \mathbb{P}(E_{2}^{\prime })\sharp \overline{%
\mathbb{C}
P^{3}}
\end{eqnarray*}%
such that the restriction of $\phi _{k}$ on $B^{4}(l)\times \mathbb{C}P^{1}$
can induce a local trivialization of the bundle $\mathbb{P}(E_{k}^{\prime })$
for $k=1,2$, where $E_{k}^{\prime }$ is the rank two complex bundle over $%
N_{k}$ with $c_{1}(E_{k}^{\prime })=-\sigma _{k}^{\ast }$ and $%
c_{2}(E_{k}^{\prime })=-1$.
\end{lemma}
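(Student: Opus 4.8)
The plan is to understand the conifold transition $Y_k$ of $B^4(l) \times \mathbb{C}P^1$ very explicitly and then recognize the glued manifold $B^4(l)\times\mathbb{C}P^1 \cup_{id} Y_k$ as a $\mathbb{C}P^1$-bundle (possibly connect-summed with $\overline{\mathbb{C}P^3}$). The key geometric fact is that the Lagrangian 3-sphere $f[S^3]$ built from the Hopf map (\ref{mapf}) sits inside a neighborhood that we can describe algebro-geometrically: the conifold $W = \{\sum z_j^2 = 0\} \subset \mathbb{C}^4$ is the affine cone over a smooth quadric $\mathbb{C}P^1 \times \mathbb{C}P^1$, and its two small resolutions $W_1, W_2$ replace the cone point by a $\mathbb{C}P^1$. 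Each $W_k$ is the total space of $\mathcal{O}(-1)\oplus\mathcal{O}(-1) \to \mathbb{C}P^1$, i.e. a rank-two bundle over $\mathbb{C}P^1$ with first Chern number $-2$ as stated in the excerpt. So the first step is to set up these identifications precisely and track how the exceptional $\mathbb{C}P^1$ in $W_k$ interacts with the $\mathbb{C}P^1$ factor coming from the base $B^4(l)\times\mathbb{C}P^1$.

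Let me think about what's happening topologically. The manifold $B^4(l)\times\mathbb{C}P^1$ is a trivial $\mathbb{C}P^1$-bundle over $B^4(l)$; removing the Lagrangian sphere and gluing in $W_k^\epsilon$ modifies it near the fiber class. I would argue that the result is again a $\mathbb{C}P^1$-bundle, but over a modified base $N_k$ obtained from $B^4(l)$ (capped off suitably). The point is that the conifold transition collapses the Lagrangian $S^3$ — which, via the Hopf fibration, is a circle bundle structure relating the $S^3 \subset \mathbb{C}^2$ to the fiber $\mathbb{C}P^1$ — and surgering it introduces an exceptional sphere. For the small resolution $W_1$ the effect should be to blow up the base (introducing the $\overline{\mathbb{C}P^2}$ summand $N_1$ with its generator $\sigma_1$), while $W_2$ caps off with $S^4$. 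The new $\mathbb{C}P^1$-bundle $\mathbb{P}(E_k')$ over $N_k$ should have Chern classes forced by two requirements: $c_1(E_k') = -\sigma_k^*$ encodes how the exceptional divisor from the resolution sits relative to the fiber/base classes, and $c_2(E_k') = -1$ records the single resolution $\mathbb{C}P^1$ that was glued in with its self-intersection data.

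The concrete strategy I would follow: first, give an explicit diffeomorphism on the level of the resolved neighborhood $W_k^\epsilon$, identifying it with the restriction of $\mathbb{P}(E_k')$ over a disk neighborhood in $N_k$ (minus a smaller disk, matching the $B^4(l)$ piece). Second, verify the gluing maps agree — this is where the symplectomorphisms $\psi_k$ of \cite[(2.1)]{STY} and the identity $id_\partial$ on $\partial B^4(l)\times\mathbb{C}P^1$ must be shown to be isotopic to the clutching data of the bundle $\mathbb{P}(E_k')$. Third, compute the Chern classes of the resulting bundle to confirm $c_1(E_k') = -\sigma_k^*$, $c_2(E_k') = -1$; here I would use the cohomology ring presentation from Example \ref{ExP}, namely $H^*(\mathbb{P}(E)) \cong H^*(N)[a]/\langle a^2 + \pi^*c_1(E)\cdot a + \pi^*c_2(E)\rangle$, together with the intersection formulas derived there, to read off the invariants from the geometry of the glued manifold. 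The requirement that $\phi_k$ restrict to a local trivialization over $B^4(l)\times\mathbb{C}P^1$ should follow automatically from how the identification is built, since $B^4(l)\times\mathbb{C}P^1$ is already trivialized.

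**The hard part will be** the careful bookkeeping at the gluing interface: showing that the symplectomorphism $f' \circ \psi_k$ used to attach $W_k^\epsilon$, once composed through the small resolution and the base identification, actually matches the transition function of the bundle $\mathbb{P}(E_k')$ up to isotopy — and in particular that the $\overline{\mathbb{C}P^3}$ summand genuinely appears only for $k=2$... wait, the statement has it for $k=2$ but with the trivial $N_2 = S^4$, so I must be careful about which resolution produces the connect-sum. I expect the main subtlety is correctly orienting everything (the $\overline{\mathbb{C}P^2}$ and $\overline{\mathbb{C}P^3}$ carry reversed orientations) and tracking how complex conjugation $\iota$ in (\ref{mapf}) and the swap of the two small resolutions interact with these orientation conventions, so that the Chern class signs $-\sigma_k^*$ and the value $c_2 = -1$ come out correctly rather than with opposite sign. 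Pinning down the diffeomorphism type of $Y_k$ itself — as $W_k^\epsilon$ is a neighborhood of $\mathbb{C}P^1$ in $\mathcal{O}(-1)\oplus\mathcal{O}(-1)$ — and matching its boundary to $\partial B^4(l)\times\mathbb{C}P^1$ is the technical crux.
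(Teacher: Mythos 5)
Your plan hinges on directly recognizing $B^{4}(l)\times \mathbb{C}P^{1}\cup _{id_{\partial }}Y_{k}$ as a $\mathbb{C}P^{1}$--bundle by matching the gluing map $f'\circ \psi _{k}$ against clutching data for $\mathbb{P}(E_{k}^{\prime })$, and you defer exactly this step as ``the hard part.'' That step is the genuine gap, and I do not think it can be carried out in the form you describe: the glued manifold carries no evident projection to a $4$--manifold, the exceptional curve $C_{k}$ coming from the small resolution is \emph{not} a fiber of the asserted bundle (for $k=1$ its homology class maps to the base class $\sigma _{1}$ of $\overline{\mathbb{C}P^{2}}$, and for $k=2$ it is a line in the $\overline{\mathbb{C}P^{3}}$ summand), and for $k=2$ the target is not a bundle at all but a connected sum, which a clutching--function argument does not produce. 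Your third step (``compute the Chern classes of the resulting bundle'') presupposes the bundle structure whose existence is precisely what needs proving. Likewise, the requirement that $\phi _{k}$ restrict to a local trivialization does not ``follow automatically'': in the paper it needs Haefliger's embedding theorem, the isotopy extension theorem, the ambient isotopy of tubular neighborhoods, and $\pi _{2}(O(4))=0$ to straighten the diffeomorphism near $O\times \mathbb{C}P^{1}$.

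The idea you are missing is that the identification is made \emph{indirectly}, via the Wall--Jupp classification of simply--connected $6$--manifolds with torsion--free homology by the invariants $(b_{3},H^{\ast },p_{1},w_{2})$. The paper first constructs embedded $4$--manifolds $r_{k}:\mathbb{C}P^{2}\sharp N_{k}\hookrightarrow M_{k}$ (Lemma \ref{ml2}), built from the surfaces $\widehat{S}_{k}\subset W_{k}^{\epsilon }$ of \cite[Theorem 2.9]{STY} glued to the disc bundle $R_{0}$ over $O\times \mathbb{C}P^{1}$, and uses them together with the surgery/cobordism description of $M_{k}$ from $S^{2}\times S^{4}$ to compute the full system of invariants of $M_{k}$ (Example \ref{ExM}); comparing with the invariants of $\mathbb{P}(E_{k}^{\prime })\sharp Q_{k}$ from Example \ref{ExP} then yields the diffeomorphism abstractly, with control on $H^{2}$, after which the isotopy arguments above adjust it to respect the product piece. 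Your geometric picture of the small resolutions and the orientation/sign bookkeeping is sensible background, but without the classification theorem (or an equally powerful substitute) the proof does not close.
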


To show this lemma, it needs to compute the topological invariants of $%
M_{k}:=B^{4}(l)\times \mathbb{C}P^{1}\cup _{id_{\partial }}Y_{k}$. As Smith
and Thomas \cite[Proposition 4.2]{ST} have computed the intersection forms
of the conifold transitions of $\mathbb{C}P^{2}\times \mathbb{C}P^{1}$ along
a canonical Lagrangian 3--sphere, we will extend their computation to the
topological invariants of $M_{k}$ in Lemma \ref{ml2} and Example \ref{ExM}.

The following Lemma will be very useful for the computation of invariants of 
$M_{k}$. Referring to the definition of conifold transitions recalled in
Section 2, as we have inclusions of the exceptional set $\mathbb{C}%
P^{1}\subset W_{k}^{\epsilon }$ and the set $O\times \mathbb{C}P^{1}\subset
B^{4}(l)\times \mathbb{C}P^{1}\backslash f[S^{3}]$ with $O\in B^{4}(l)$ the
original point, let $C_{k}$ and $P_{k}$ denote the images of the exceptional
set $\mathbb{C}P^{1}$ and the set $O\times \mathbb{C}P^{1}$ under the
natural inclusions $W_{k}^{\epsilon }\rightarrow Y_{k}\rightarrow M_{k}$ and 
$B^{4}(l)\times \mathbb{C}P^{1}\backslash f[S^{3}]\rightarrow
Y_{k}\rightarrow M_{k}$, respectively.

\begin{lemma}
\label{ml2}Let $\sigma \in H_{2}(%
\mathbb{C}
P^{2})$ be the preferred generator. Then there are two differentiable
embeddings $r_{k}:%
\mathbb{C}
P^{2}\sharp N_{k}\rightarrow M_{k}$, $k=1,2$ satisfying the following
conditions:

(i)Under the homomorphism%
\begin{equation*}
r_{k\ast }:H_{2}(%
\mathbb{C}
P^{2}\sharp N_{k})\cong H_{2}(%
\mathbb{C}
P^{2})\oplus H_{2}(N_{k})\rightarrow H_{2}(M_{k}),
\end{equation*}
the images of $\sigma $ and $\sigma _{k}$ are the homology classes $[P_{k}]$
and $\frac{1-(-1)^{k}}{2}\cdot \lbrack C_{k}]$ in $H_{2}(M_{k})$,
respectively.

(ii)The Euler class of the normal bundle of $r_{k}$ is%
\begin{equation*}
(-\sigma ^{\ast },-\sigma _{k}^{\ast })\in H^{2}(%
\mathbb{C}
P^{2}\sharp N_{k})\cong H^{2}(%
\mathbb{C}
P^{2})\oplus H^{2}(N_{k}),
\end{equation*}
where $\sigma ^{\ast }\in H^{2}(%
\mathbb{C}
P^{2})$ is the dual cohomology class of $\sigma $;

(iii)In $M_{k}$, the intersection number of the submanifolds $r_{k}[%
\mathbb{C}
P^{2}\sharp N_{k}]$ and $C_{k}$ is $(-1)^{k}$.
\end{lemma}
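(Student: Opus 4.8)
The plan is to construct each embedding $r_{k}$ piecewise, in the spirit of Smith and Thomas \cite[Proposition 4.2]{ST}, by exhibiting an explicit model $4$--manifold inside each of the three pieces out of which $M_{k}$ is assembled: the attached copy $A=B^{4}(l)\times \mathbb{C}P^{1}$, the complement $B^{4}(l)\times \mathbb{C}P^{1}\setminus f[S^{3}]$, and the resolution neighbourhood $W_{k}^{\epsilon }$. The geometric heart is the observation that, since $f(x)=(\overline{x},h(x))$, the Lagrangian $f[S^{3}]$ of (\ref{mapf}) is exactly the slice over the unit sphere $\{|z|=1\}$ of the closure of the graph of the \emph{conjugate Hopf map} $z\mapsto \overline{h(z)}$. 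This closure is a complex blow--up $\mathrm{Bl}_{O}B^{4}(l)$ of the ball at the origin; it carries the fibre $O\times \mathbb{C}P^{1}$ (the curve $P_{k}$) as its exceptional set and meets the locus where we cut out $f[S^{3}]$ in a single $S^{3}$. Over $W_{k}^{\epsilon }\cong \mathcal{O}(-1)^{\oplus 2}\to \mathbb{C}P^{1}$ I would cap this off with a ruling submanifold: the total space of one $\mathcal{O}(-1)$--summand, which contains the zero--section $C_{k}$, for one resolution, and a fibre disk, which meets $C_{k}$ transversally in one point, for the other. Because the two small resolutions are precisely the two rulings of the quadric cone, this dichotomy is the source of the parity $\tfrac{1-(-1)^{k}}{2}$ in (i) and of the sign $(-1)^{k}$ in (iii).

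The key steps, in order, are then: (a) fix coordinates identifying $\partial W_{k}^{\epsilon }\cong \partial T_{\epsilon }^{\ast }S^{3}\cong S^{3}\times S^{2}$ through the symplectomorphism $\psi _{k}$ of \cite[(2.1)]{STY}; (b) verify that the blow--up pieces over the two ball factors and the ruling cap over $W_{k}^{\epsilon }$ have matching boundary $3$--manifolds inside $S^{3}\times S^{2}$, so that their union is a smoothly embedded closed surface, and identify its diffeomorphism type with $\mathbb{C}P^{2}\sharp N_{k}$; (c) read off the homology classes of the distinguished spheres. For (c) and condition (i), the fibre $P_{k}$ is the exceptional curve of the blow--up piece and realises the line $\sigma $ of the $\mathbb{C}P^{2}$ summand, giving $r_{k\ast }(\sigma )=[P_{k}]$; for $k=1$ the generator $\sigma _{1}$ of $N_{1}=\overline{\mathbb{C}P^{2}}$ is realised by the zero--section $C_{1}$ of the $\mathcal{O}(-1)$--ruling (a sphere of self--intersection $-1$), giving $r_{1\ast }(\sigma _{1})=[C_{1}]$, while for $k=2$ the cap is a disk, $N_{2}=S^{4}$ contributes no $2$--cycle, and $\sigma _{2}=0\mapsto 0$.

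Conditions (ii) and (iii) should then follow by splitting the total normal bundle of each distinguished sphere in $M_{k}$ as (normal within the surface) $\oplus $ (normal of the surface). Along $\sigma $ the total normal bundle of the fibre $P_{k}$ is trivial, i.e. $c_{1}=0$, and its self--intersection inside the surface is $+1$, so the surface--normal line bundle has degree $-1$, yielding $-\sigma ^{\ast }$. Along $\sigma _{1}$ the total normal bundle of $C_{1}$ is $\mathcal{O}(-1)^{\oplus 2}$, i.e. $c_{1}=-2$, and its self--intersection inside the surface is $-1$, so the surface--normal bundle again has degree $-1$, yielding $-\sigma _{1}^{\ast }$; this establishes (ii). Condition (iii) is then almost immediate: for $k=1$ the class $\sigma _{1}$ is realised by $C_{1}$ lying in the surface, so $r_{1}[\,\cdot\,]\cdot C_{1}=\langle e(\nu _{r_{1}}),\sigma _{1}\rangle =\langle -\sigma _{1}^{\ast },\sigma _{1}\rangle =-1$, while for $k=2$ the curve $C_{2}$ meets the surface transversally at the single point where the fibre disk crosses the zero--section, giving $+1$; both cases are $(-1)^{k}$.

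The main obstacle I expect is step (b): controlling the gluing across $\partial W_{k}^{\epsilon }\cong S^{3}\times S^{2}$ through $\psi _{k}$. One must check that the boundary $3$--manifolds of the blow--up pieces and of the ruling caps coincide as submanifolds of $S^{3}\times S^{2}$ under $\psi _{k}$, so that the assembled surface is genuinely smoothly embedded and closed, and then pin down that its diffeomorphism type is exactly $\mathbb{C}P^{2}\sharp N_{k}$ rather than some other connected sum with the same second Betti number. Getting the orientations right is the delicate part: the $\pm 1$ self--intersections of the exceptional curves are tied both to the conjugation in $\overline{h}$ and to the orientation reversal between the two ball factors produced by the doubling $B^{4}\cup _{S^{3}}B^{4}=S^{4}$, and it is this bookkeeping, carried out through the explicit formula for $\psi _{k}$, that must ultimately produce the ruling--dependent answers $\tfrac{1-(-1)^{k}}{2}$ and $(-1)^{k}$.
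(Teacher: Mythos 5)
Your surface is the same one the paper builds: the closure of the graph $\{(\overline{w},[w])\}$ over the inner ball (the set $R_{0}$, an open disc bundle over $O\times \mathbb{C}P^{1}$ of Euler number $+1$ whose core is $P_{k}$), capped off inside $W_{k}^{\epsilon }$ by a line bundle of Euler number $-1$ over the exceptional curve when $k=1$ and by a copy of $\mathbb{R}^{4}$ meeting the exceptional curve once when $k=2$; and your Euler--class and intersection bookkeeping for (i)--(iii) agrees with the paper's. But the step you defer to the end --- matching the two pieces across the gluing region --- is the entire content of the proof, and the way you propose to do it (comparing boundary 3--manifolds inside $S^{3}\times S^{2}$ under $\psi _{k}$ and arguing they coincide) does not close as stated: a priori the two boundary 3--spheres are only isotopic, and an isotopy is not enough to conclude that the union is a smooth closed submanifold, nor to pin down the normal data in (ii) and the sign in (iii).

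The paper closes this gap by routing everything through the Lagrangian cone $\Delta _{\epsilon }=\{(u,v):v=\lambda iu,\ \lambda \geq 0\}\subset T_{\epsilon }^{\ast }S^{3}$. On the resolution side, the proof of Theorem 2.9 of \cite{STY} already produces submanifolds $\widehat{S}_{k}\subset W_{k}^{\epsilon }$ (your two caps) with the exact, not merely up--to--isotopy, property that $\psi _{k}^{-1}[\Delta _{\epsilon }\backslash \{v=0\}]$ equals $\widehat{S}_{k}$ minus its core, together with the intersection number $(-1)^{k}$ with the exceptional curve. On the ball side, one writes out the Weinstein--neighborhood embedding $\overline{f}(u,v)=\exp _{f(u)}(-J_{f(u)}\circ df_{u}\circ \Phi _{u}(v))$ and computes explicitly that $\overline{f}(u,\lambda iu)=((1-\lambda )\overline{u},[u])$, so that $\overline{f}$ carries $\Delta _{\epsilon }\backslash \{v=0\}$ diffeomorphically onto $R_{0}$ minus a closed neighborhood of its core. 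These two exact identifications are what make $R_{0}\cup \widehat{S}_{k}$ a genuinely smooth closed submanifold diffeomorphic to $\mathbb{C}P^{2}\sharp N_{k}$ and simultaneously deliver (ii) and (iii); this is the missing idea in your step (b). A smaller point: drop the third piece of your decomposition. The surface lies entirely in $Y_{k}$ (the region $|w|<1$ plus the cap), never meets the attached copy $B^{4}(l)\times \mathbb{C}P^{1}$, and inserting a further graph piece over the second ball factor would raise $b_{2}$ of the surface and destroy the identification with $\mathbb{C}P^{2}\sharp N_{k}$.
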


To show this lemma, first recall some results in the proof of \cite[Theorem
2.9]{STY} and \cite[Theorem 3.33]{McS}. Let%
\begin{equation*}
\Delta _{\epsilon }=\left\{ \left( u,v\right) \in T_{\epsilon }^{\ast
}S^{3}|(v_{1},v_{2},v_{3},v_{4})=\lambda (-u_{2},u_{1},-u_{4},u_{3});\lambda
\geq 0\right\}
\end{equation*}%
and fix $W_{k}$, $k=1,2$ as $W^{\pm }$ in \cite{STY}, respectively. \cite[%
Theorem 2.9]{STY} finds 4--dimensional submanifolds $\widehat{S}_{k}\subset
W_{k}^{\epsilon }$, $k=1,2$ such that

\begin{quote}
(1)$\widehat{S}_{1}$ is the complex line bundle over the exceptional set $%
\mathbb{C}P^{1}\subset W_{1}^{\epsilon }$ with Euler class $-1$ and $\psi
_{1}^{-1}[\Delta _{\epsilon }\backslash \left\{ v=0\right\} ]=\widehat{S}%
_{1}\backslash \mathbb{C}P^{1}$;\newline
(2)$\widehat{S}_{2}$ is diffeomorphic to $%
\mathbb{R}
^{4}$ and $\psi _{2}^{-1}[\Delta _{\epsilon }\backslash \left\{ v=0\right\}
] $ is equal to $\widehat{S}_{2}$ with a point removed.\newline
(3)The intersection number of $\widehat{S}_{k}$ and the exceptional set $%
\mathbb{C}P^{1}$ in $W_{k}^{\epsilon }$ is $(-1)^{k}$.
\end{quote}

\noindent Considering the symplectic form $d(vdu)$ on $T^{\ast }S^{3}$ and
applying \cite[Theorem 3.33]{McS} to the Lagrangian embedding $f$, this
defines an embedding $\overline{f}:T_{\epsilon }^{\ast }S^{3}\rightarrow
B^{4}(l)\times \mathbb{C}P^{1}$ by $\overline{f}(u,v)=\exp
_{f(u)}(-J_{f(u)}\circ df_{u}\circ \Phi _{u}(v))$, where $J$ is a compatible
almost complex structure on $(B^{4}(l)\times \mathbb{C}P^{1},\omega
_{0}\times \omega _{\mathbb{C}P^{1}})$ and $\Phi _{u}:T_{u}^{\ast
}S^{3}\rightarrow T_{u}S^{3}$ is an isomorphism determined by the relation $%
\omega _{0}\times \omega _{\mathbb{C}P^{1}}(df_{u}\circ \Phi
_{u}(v),J_{f(u)}\circ df_{u}(v^{\prime }))=v(v^{\prime })$ for $v^{\prime
}\in T_{q}S^{3}$.\medskip

\begin{proof}[Proof of Lemma \protect\ref{ml2}]
As \cite[Theorem 3.33]{McS} shows that $\overline{f}$ is isotopic to a
symplectic embedding which represents a Lagrangian neighborhood of $f$, thus 
$Y_{k}$ is diffeomorphic to $B^{4}(l)\times \mathbb{C}P^{1}\backslash
f[S^{3}]\cup _{\overline{f}\circ \psi _{k}}W_{k}^{\epsilon }$. We claim that
the restriction of $\overline{f}$ on $\Delta _{\epsilon }\backslash \left\{
v=0\right\} $ is a diffeomorphism onto the relative complement of a closed
neighborhood of 
\begin{equation*}
O\times \mathbb{C}P^{1}\subset R_{0}=\left\{ \left( \overline{w},\left[ w%
\right] \right) \in B^{4}(l)\times \mathbb{C}P^{1}|\left\vert w\right\vert
<1\right\} .
\end{equation*}%
If it is true, then combining this claim with the conditions (1), (2), (3)
above and the fact that $R_{0}$ is the open disc bundle over $O\times 
\mathbb{C}P^{1}$ with Euler class $1$, it would imply that $R_{0}\cup _{%
\overline{f}\circ \psi _{k}|\psi _{k}^{-1}[\Delta _{\epsilon }\backslash
\left\{ v=0\right\} ]}\widehat{S}_{k}\cong 
\mathbb{C}
P^{2}\sharp N_{k}$ are well--defined differentiable submanifolds of $%
B^{4}(l)\times \mathbb{C}P^{1}\backslash f[S^{3}]\cup _{\overline{f}\circ
\psi _{k}}W_{k}^{\epsilon }\cong Y_{k}\subset M_{k}$ for $k=1,2$,
respectively, which gives embeddings $r_{k}:%
\mathbb{C}
P^{2}\sharp N_{k}\hookrightarrow M_{k}$ satisfying (i)(iii). (ii) would
follow from the fact that the restriction of the normal bundle of $%
R_{0}\subset B^{4}(l)\times \mathbb{C}P^{1}$ to $O\times \mathbb{C}P^{1}$
has Euler class $-1$ and so does the restriction of the normal bundle of $%
\widehat{S}_{1}\subset W_{1}^{\epsilon }$ to the exceptional set $\mathbb{C}%
P^{1}$.

Now it remains to show our claim. Under the identifications%
\begin{eqnarray*}
TS^{3} &=&T^{\ast }S^{3}=\left\{ \left( u,v\right) \in 
\mathbb{R}
^{4}\times 
\mathbb{R}
^{4}|\left\vert u\right\vert =1,\left\langle u,v\right\rangle =0\right\} , \\
\mathbb{R}
^{4} &=&%
\mathbb{C}
^{2}:\left( r_{1},r_{2},r_{3},r_{4}\right) \longmapsto \left(
r_{1}+ir_{2},r_{3}+ir_{4}\right) ,
\end{eqnarray*}%
it is easy to see that $v(v^{\prime })=\omega _{0}(v,Jv^{\prime })=$ $\omega
_{0}(\overline{v},J\overline{v^{\prime }})$ with $(\overline{v},\overline{%
v^{\prime }})$ the complex conjugate of $\left( v,v^{\prime }\right) \in
T_{u}^{\ast }S^{3}\times T_{u}S^{3}$. Thus for any $\left( u,v\right) \in
\Delta _{\epsilon }\backslash \left\{ v=0\right\} $, we have 
\begin{eqnarray*}
v &=&\lambda iu=\lambda \sqrt{-1}u,\lambda >0; \\
df_{u}(v) &=&\left( \overline{v},\left[ v\right] \right) =\left( \overline{v}%
,\left[ 0\right] \right) \in T_{(\overline{u},[u])}B^{4}(l)\times \mathbb{C}%
P^{1}=%
\mathbb{C}
^{2}\times 
\mathbb{C}
^{2}/%
\mathbb{C}
u
\end{eqnarray*}
and hence $\Phi _{u}(v)=v$. These relations, together with the fact that the
complex structure $J_{f(u)}$ on $T_{f(u)}B^{4}(l)\times \mathbb{C}P^{1}$ is
induced by the multiplication by $i=\sqrt{-1}$ on $%
\mathbb{C}
^{2}\times 
\mathbb{C}
^{2}/%
\mathbb{C}
u$, imply%
\begin{equation*}
\overline{f}(u,v)=\exp _{(\overline{u},[u])}(-\lambda \overline{u}%
,[0])=((1-\lambda )\overline{u},[u])\in R_{0}\backslash O\times \mathbb{C}%
P^{1}\text{.}
\end{equation*}%
This completes the proof.
\end{proof}

Using Lemma \ref{ml2}, we can compute the topological invariants of $%
M_{k}=B^{4}(l)\times \mathbb{C}P^{1}\cup _{id_{\partial }}Y_{k}$ for $k=1,2$.

\begin{example}
\label{ExM}We first claim that $M_{k}$ is a simply--connected 6--manifold
with $b_{3}=0$ and $H^{2}(M_{k})$ has a basis consists of $z_{k}$ and $x_{k}$%
, where $z_{k}$ is the Poincar\'{e} dual of the submanifold $R_{k}=r_{k}[%
\mathbb{C}
P^{2}\sharp N_{k}]\subset M_{k}$ and the definition of $x_{k}$ is contained
in the following proof of the claim. Since $M_{k}$ is obtained by surgery
along an embedding $S^{3}\times D^{3}\hookrightarrow S^{2}\times S^{4}$ with 
$C_{k}$ the resulting 2--sphere\cite{C}, then $M_{k}$ is simply--connected
and there is a cobordism $W_{k}$ between $S^{2}\times S^{4}$ and $M_{k}$,
assuming $j_{k}:S^{2}\times S^{4}\hookrightarrow W_{k}$ and $j_{k}^{\prime
}:M_{k}\hookrightarrow W_{k}$ are the inclusions. From the cohomology exact
sequence of the pairs $(W_{k},M_{k})$ and $(W_{k},S^{2}\times S^{4})$, it is
easy to show that $H_{3}(M_{k})\cong H^{3}(M_{k})$ is trivial. Furthermore,
consider the exact sequence 
\begin{equation}
0\rightarrow H^{2}(W_{k})\overset{j_{k}^{\prime \ast }}{\rightarrow }%
H^{2}(M_{k})\overset{\delta }{\rightarrow }H^{3}(W_{k},M_{k}),  \tag{3.1}
\label{es}
\end{equation}
then $\delta z_{k}$ is a generator of $H^{3}(W_{k},M_{k})$ because the value
of $\delta z_{k}$ on the generator of $H_{3}(W_{k},M_{k})$ is equal to $%
\left\langle z_{k},[C_{k}]\right\rangle =(-1)^{k}$ by Lemma \ref{ml2} (iii).
This, together with the isomorphism $H^{2}(W_{k})\overset{j_{k}^{\ast }}{%
\rightarrow }H^{2}(S^{2}\times S^{4})$ and the exact sequence (\ref{es}),
implies that $x_{k}:=j_{k}^{\prime \ast }j_{k}^{\ast -1}a$ and $z_{k}$ form
a basis of $H^{2}(M_{k})$, where $a\in H^{2}(S^{2}\times S^{4})$ is the dual
class of the preferred generator $\left[ S^{2}\right] $ of $%
H_{2}(S^{2}\times S^{4})$. \newline
(i)The cohomology ring of $M_{k}$: The relations $j_{k\ast }^{\prime }\left[
P_{k}\right] =j_{k\ast }[S^{2}]$ and $\delta x_{k}=0$, together with Lemma %
\ref{ml2} (i) and the fact that $\left\langle x_{k},[C_{k}]\right\rangle $
is equal to the value of $\delta x_{k}\in H^{3}(W_{k},M_{k})$ on the
generator of $H_{3}(W_{k},M_{k})$, imply 
\begin{equation}
\left\langle r_{k}^{\ast }x_{k},\sigma \right\rangle =\left\langle x_{k}, 
\left[ P_{k}\right] \right\rangle =\left\langle a,[S^{2}]\right\rangle
=1;\left\langle r_{k}^{\ast }x_{k},\sigma _{k}\right\rangle =0  \tag{3.2}
\label{value}
\end{equation}%
for the basis $\sigma ,\sigma _{k}\in H_{2}(%
\mathbb{C}
P^{2}\sharp N_{k})\cong H_{2}(%
\mathbb{C}
P^{2})\oplus H_{2}(N_{k})$. Let $e(\nu r_{k})$ denote the Euler class of the
normal bundle $\nu r_{k}$ of $r_{k}$, then it follows from the values(\ref%
{value}) and Lemma \ref{ml2} (ii) that 
\begin{eqnarray*}
\left\langle z_{k}^{3},[M_{k}]\right\rangle &=&\left\langle
z_{k}^{2},[R_{k}]\right\rangle =\left\langle e(\nu r_{k})^{2},[%
\mathbb{C}
P^{2}\sharp N_{k}]\right\rangle =\frac{1+(-1)^{k}}{2}; \\
\left\langle z_{k}x_{k}^{2},[M_{k}]\right\rangle &=&\left\langle
x_{k}^{2},[R_{k}]\right\rangle =\left\langle r_{k}^{\ast }x_{k}^{2},[%
\mathbb{C}
P^{2}\sharp N_{k}]\right\rangle =1; \\
\left\langle x_{k}z_{k}^{2},[M_{k}]\right\rangle &=&\left\langle
x_{k}z_{k},[R_{k}]\right\rangle =\left\langle r_{k}^{\ast }x_{k}\cup e(\nu
r_{k}),[%
\mathbb{C}
P^{2}\sharp N_{k}]\right\rangle =-1; \\
\left\langle x_{k}^{3},[M_{k}]\right\rangle &=&\left\langle j_{k}^{^{\prime
}\ast }j_{k}^{\ast -1}a^{3},[M_{k}]\right\rangle =0\text{.}
\end{eqnarray*}%
(ii)The first Pontrjagin class of $M_{k}$: The exact sequence%
\begin{equation*}
H_{7}(W_{k})\overset{\partial }{\rightarrow }H_{6}(S^{2}\times S^{4}\sqcup
M_{k})\rightarrow H_{6}(W_{k}),
\end{equation*}
together with the relations $\partial \left[ W_{k}\right] =[M_{k}]-[S^{2}%
\times S^{4}]$, $p_{1}(M_{k})=j_{k}^{\prime \ast }p_{1}(W_{k})$ and $%
\left\langle p_{1}(W_{k})\cup j_{k}^{\prime \ast -1}x_{k},j_{k\ast
}[S^{2}\times S^{4}]\right\rangle =\left\langle p_{1}(S^{2}\times S^{4})\cup
a,[S^{2}\times S^{4}]\right\rangle =0$, imply 
\begin{equation*}
\left\langle p_{1}(M_{k})x_{k},[M_{k}]\right\rangle =\left\langle
p_{1}(W_{k})\cup j_{k}^{\prime \ast -1}x_{k},j_{k\ast }^{\prime
}[M_{k}]-j_{k\ast }[S^{2}\times S^{4}]\right\rangle =0\text{.}
\end{equation*}%
From the relations $p_{1}(\nu r_{k})=e(\nu r_{k})^{2}$, $\left\langle p_{1}(%
\mathbb{C}
P^{2}\sharp N_{k}),\left[ 
\mathbb{C}
P^{2}\sharp N_{k}\right] \right\rangle =3\cdot \frac{1+(-1)^{k}}{2}$ and $%
z_{k}\cap \lbrack M_{k}]=r_{k\ast }[%
\mathbb{C}
P^{2}\sharp N_{k}]$, together with Lemma \ref{ml2} (ii) and the
decomposition $r_{k}^{\ast }TM_{k}=T(%
\mathbb{C}
P^{2}\sharp N_{k})\oplus \nu r_{k}$, we get 
\begin{equation*}
\left\langle p_{1}(M_{k})z_{k},[M_{k}]\right\rangle =\left\langle
r_{k}^{\ast }p_{1}(M_{k}),[%
\mathbb{C}
P^{2}\sharp N_{k}]\right\rangle =2\times (1+(-1)^{k}).
\end{equation*}%
(iii)The second Whitney class of $M_{k}$: As the value $w_{2}(S^{2}\times
S^{4})=0$ and the isomorphism $j_{k}^{\ast }:H^{2}(W_{k})\rightarrow
H^{2}(S^{2}\times S^{4})$ imply $w_{2}(W_{k})=0$, thus%
\begin{equation*}
w_{2}(M_{k})=j_{k}^{\prime \ast }w_{2}(W_{k})=0.
\end{equation*}
\end{example}

Now we can prove the Lemma \ref{ml}.

\begin{proof}[Proof of Lemma \protect\ref{ml}]
Denote $S^{6}$ and $\overline{%
\mathbb{C}
P^{3}}$ by $Q_{1}$ and $Q_{2}$, respectively. By Wall and Jupp's
classification of simply--connected 6--manifolds with torsion--free homology%
\cite{W}\cite{J}, comparing the invariants of $M_{k}$ and $\mathbb{P}%
(E_{k}^{\prime })$ (see Example\ref{ExM} and Example\ref{ExP}), we get two
diffeomorphisms 
\begin{equation*}
\varphi _{k}:M_{i}\rightarrow \mathbb{P}(E_{k}^{\prime })\sharp Q_{k}\text{, 
}k=1,2
\end{equation*}%
such that $\varphi _{k}^{\ast }a_{k}=x_{k}+\frac{1+(-1)^{k}}{2}\cdot z_{k}$
for $k=1,2$, $\varphi _{1}^{\ast }\pi _{1}^{\ast }(-\sigma _{1}^{\ast
})=-z_{1}$ and $\varphi _{2}^{\ast }z^{\prime }=z_{2}$, where 
\begin{equation*}
a_{k}\in H^{2}(\mathbb{P}(E_{k}^{\prime })\sharp Q_{k})\cong H^{2}(\mathbb{P}%
(E_{k}^{\prime }))\oplus H^{2}(Q_{k})
\end{equation*}%
denote the first Chern classes of the dual bundles of the tautological line
bundles over $\mathbb{P}(E_{k}^{\prime })$ for $k=1,2$, respectively, $\pi
_{1}:P(E_{1}^{\prime })\rightarrow \overline{%
\mathbb{C}
P^{2}}$ is the bundle projection, and%
\begin{equation*}
z^{\prime }\in H^{2}(\mathbb{P}(E_{2}^{\prime })\sharp \overline{%
\mathbb{C}
P^{3}})\cong H^{2}(\mathbb{P}(E_{2}^{\prime }))\oplus H^{2}(\overline{%
\mathbb{C}
P^{3}})
\end{equation*}%
is the Poincar\'{e} dual of the submanifold $%
\mathbb{C}
P^{2}\subset \overline{%
\mathbb{C}
P^{3}}$.

We claim that $\varphi _{k\ast }[O\times \mathbb{C}P^{1}]=f_{k\ast }[\mathbb{%
C}P^{1}]$ for the submanifold $O\times \mathbb{C}P^{1}\subset B^{4}(l)\times 
\mathbb{C}P^{1}\subset M_{k}$ and embeddings $f_{k}:\mathbb{C}%
P^{1}\rightarrow \mathbb{P}(E_{k}^{\prime })\sharp Q_{k}$ representing a
fiber of $\mathbb{P}(E_{k}^{\prime })$. As the relations $\left\langle
z_{k},[O\times \mathbb{C}P^{1}]\right\rangle =0$ and $j_{k\ast }^{\prime }%
\left[ P_{k}\right] =j_{k\ast }[S^{2}]=j_{k\ast }^{\prime }[O\times \mathbb{C%
}P^{1}]$ imply that $[O\times \mathbb{C}P^{1}]$ is the dual base of $x_{k}+%
\frac{1+(-1)^{k}}{2}\cdot z_{k}=\varphi _{k}^{\ast }a_{k}$ in the basis 
\begin{equation*}
\left\{ x_{k}+\frac{1+(-1)^{k}}{2}\cdot z_{k},z_{k}\right\} =\left\{ 
\begin{array}{c}
\{\varphi _{1}^{\ast }a_{1},\varphi _{1}^{\ast }\pi _{1}^{\ast }(-\sigma
_{1}^{\ast })\}\text{ for }k=1, \\ 
\{\varphi _{2}^{\ast }a_{2},\varphi _{2}^{\ast }z^{\prime }\}\text{ for }k=2,%
\end{array}%
\right. 
\end{equation*}%
comparing this with the fact that $f_{k\ast }[\mathbb{C}P^{1}]$ is the dual
base of $a_{k}$ in the basis $\left\{ a_{1},\pi _{1}^{\ast }(-\sigma
_{1}^{\ast })\right\} $ for $k=1$ and in the basis $\left\{ a_{2},z^{\prime
}\right\} $ for $k=2$, respectively, shows the claim.

Since the claim above implies that $\varphi _{k}|_{O\times \mathbb{C}P^{1}}$
is homotopic to $f_{k}$, then by \cite[THEOREM 1]{Ha} and the isotopy
extension theorem\cite[Chapter 8, 1.3. Theorem]{Hi}, there is an isotopy $%
F_{t}^{k}:\mathbb{P}(E_{k}^{\prime })\sharp Q_{k}\rightarrow \mathbb{P}%
(E_{k}^{\prime })\sharp Q_{k}$, $0\leq t\leq 1$, such that $F_{0}^{k}=id$
and $F_{1}^{k}\circ \varphi _{k}|_{O\times \mathbb{C}P^{1}}=f_{k}$. Let $%
\overline{f_{k}}:B^{4}(l)\times \mathbb{C}P^{1}\rightarrow \mathbb{P}%
(E_{k}^{\prime })\sharp Q_{k}$ be an extension of $f_{k}$ which can induce a
local trivialization of the bundle $\mathbb{P}(E_{k}^{\prime })$, then $%
F_{1}^{k}\circ \varphi _{k}|_{B^{4}(l)\times \mathbb{C}P^{1}}$ and $%
\overline{f_{k}}$ determine two closed tubular neighborhoods of $f_{k}[%
\mathbb{C}P^{1}]$. By the ambient isotopy theorem for closed tubular
neighborhoods\cite[Chapter 4, Section 6, Exercises 9]{Hi}, there exists an
isotopy $H_{t}^{k}:\mathbb{P}(E_{k}^{\prime })\sharp Q_{k}\rightarrow 
\mathbb{P}(E_{k}^{\prime })\sharp Q_{k}$, $0\leq t\leq 1$, such that $%
H_{0}^{k}=id$, $H_{1}^{k}\circ F_{1}^{k}\circ \varphi _{k}[B^{4}(l)\times 
\mathbb{C}P^{1}]=\overline{f_{k}}[B^{4}(l)\times \mathbb{C}P^{1}]$ and%
\begin{equation*}
g:=\overline{f_{k}}^{-1}\circ H_{1}^{k}\circ F_{1}^{k}\circ \varphi
_{k}|_{B^{4}(l)\times \mathbb{C}P^{1}}:B^{4}(l)\times \mathbb{C}%
P^{1}\rightarrow B^{4}(l)\times \mathbb{C}P^{1}
\end{equation*}%
is a $B^{4}(l)-$bundle isomorphism. As the homotopy group $\pi _{2}(O(4))$
of the real orthogonal group $O(4)$ is trivial, this implies $g|_{\partial
B^{4}(l)\times \mathbb{C}P^{1}}$ is isotopic to the identity map of $%
\partial B^{4}(l)\times \mathbb{C}P^{1}$ and then similar to the proof of 
\cite[Chapter 8, 2.3]{Hi}, we can extend $g$ to a self--diffeomorphism $\phi 
$ of $M_{k}=B^{4}(l)\times \mathbb{C}P^{1}\cup _{id_{\partial }}Y_{k}$ which
is identity outside a neighborhood of $B^{4}(l)\times \mathbb{C}P^{1}$.
Consequently, the restriction of $\phi _{k}:=H_{1}^{k}\circ F_{1}^{k}\circ
\varphi _{k}\circ \phi ^{-1}$ on $B^{4}(l)\times \mathbb{C}P^{1}$ is equal
to $\overline{f_{k}}$ and hence $\phi _{k},k=1,2,$ are the desired
diffeomorphisms.
\end{proof}

\subsection{Topology of symplectic conifold transitions of $\mathbb{C}P^{1}-$%
bundles}

The establishment of Lemma \ref{ml} make it possible to prove Theorem \ref%
{mt}, which determines the diffeomorphism types of symplectic conifold
transitions of $\mathbb{C}P^{1}-$bundles over 4--manifolds along canonical
Lagrangian 3--spheres. In this section, we show this theorem and Corollary %
\ref{mc}.

\begin{proof}[Proof of the theorem \protect\ref{mt}]
From \cite[Theorem 2.9]{STY} and the definition of  the two symplectic
conifold transitions $Z_{k}$, $k=1,2$ along a canonical Lagrangian embedding 
$S^{3}\overset{f}{\rightarrow }B^{4}(l)\times \mathbb{C}P^{1}\overset{\eta }{%
\rightarrow }\mathbb{P}\left( E\right) $, we get the identification%
\begin{equation*}
Z_{k}=\mathbb{P}\left( E\right) \cup _{\eta }M_{k}\diagdown (\text{Interior }%
\eta \lbrack B^{4}(l)\times \mathbb{C}P^{1}])
\end{equation*}%
as almost complex manifolds with $B^{4}(l)\times \mathbb{C}P^{1}$ seen as a
subset of $M_{k}=B^{4}(l)\times \mathbb{C}P^{1}\cup _{id_{\partial }}Y_{k}$.
Denote $S^{6}$ and $\overline{%
\mathbb{C}
P^{3}}$ by $Q_{1}$ and $Q_{2}$, respectively, and let $E\cup _{%
\mathbb{C}
^{2}}E_{k}^{\prime }$ denote the complex vector bundle over the one point
union $N\vee N_{k}$ obtained by identifying one fiber $%
\mathbb{C}
^{2}$ of the two bundles $E$ and $E_{k}^{\prime }$, respectively. The
identity map $id$ of $\mathbb{P}\left( E\right) $ and the diffeomorphisms $%
\phi _{k}:M_{k}\rightarrow \mathbb{P}\left( E_{k}^{\prime }\right) \sharp
Q_{k}$ in Lemma \ref{ml} contribute to define diffeomorphisms%
\begin{equation*}
\Psi _{k}:Z_{k}\overset{\cong }{\rightarrow }\mathbb{P}\left( E_{k}\right)
\sharp Q_{k},k=1,2
\end{equation*}%
where $E_{k}$ is the pullback bundle of the bundle $E\cup _{%
\mathbb{C}
^{2}}E_{k}^{\prime }$ under the natural map $N\sharp N_{k}\rightarrow N\vee
N_{k}$. It is very easy to get the Chern class of $E_{k}$ from the
isomorphism $H^{2}(N\vee N_{k})\cong H^{2}(N\sharp N_{k})$, the homomorphism%
\begin{equation*}
H^{4}(N\vee N_{k})\cong 
\mathbb{Z}
\oplus 
\mathbb{Z}
\rightarrow H^{4}(N\sharp N_{k})\cong 
\mathbb{Z}
:(a,b)\longmapsto a+b
\end{equation*}
and the values%
\begin{equation*}
c_{j}(E\cup _{%
\mathbb{C}
^{2}}E_{k}^{\prime })=(c_{j}(E),c_{j}(E_{k}))\in H^{2j}(N)\oplus
H^{2j}(N_{k})\cong H^{2j}(N\vee N_{k})
\end{equation*}%
for $j=1,2$. 

Assume $N$ is symplectic. To prove the diffeomorphisms $\Psi _{k}$ preserve $%
c_{1}$, consider the commutative diagram%
\begin{equation}
\begin{array}{ccc}
H^{2}(\mathbb{P}\left( E_{k}\right) \sharp Q_{k}) & \overset{\Psi _{k}^{\ast
}}{\rightarrow } & H^{2}(Z_{k}) \\ 
\uparrow \cong  &  & \uparrow \cong  \\ 
H^{2}(\mathbb{P}\left( E\right) \cup _{\eta \circ \overline{f_{k}}^{-1}}%
\mathbb{P}\left( E_{k}^{\prime }\right) \sharp Q_{k}) & \overset{(id\cup
\phi _{k})^{\ast }}{\rightarrow } & H^{2}(\mathbb{P}\left( E\right) \cup
_{\eta }M_{k}) \\ 
\downarrow  &  & \downarrow  \\ 
H^{2}(\mathbb{P}\left( E\right) )\oplus H^{2}(\mathbb{P}\left( E_{k}^{\prime
}\right) \sharp Q_{k}) & \overset{id^{\ast }\oplus \phi _{k}^{\ast }}{%
\rightarrow } & H^{2}(\mathbb{P}\left( E\right) )\oplus H^{2}(M_{k})%
\end{array}
\tag{3.3}  \label{diagram}
\end{equation}%
where $\overline{f_{k}}:B^{4}(l)\times \mathbb{C}P^{1}\rightarrow \mathbb{P}%
\left( E_{k}^{\prime }\right) \sharp Q_{k}$ is the restriction of $\phi _{k}$
as in the proof of Lemma \ref{ml} and the vertical homomorphisms are induced
by the natural inclusions. As the conifold transitions is an almost complex
operation preserving the first Chern class \cite{STY}\cite[Lemma 2]{CS}, the
formula of the first Chern class of a blowup at a point \cite[p.608]{GH} and 
$c_{1}(T\mathbb{P}\left( E\right) )=2a+\pi ^{\ast }(c_{1}(TN)+c_{1}(E))$ by
Example \ref{ExP}(ii), imply that the images of $c_{1}(T\mathbb{P}\left(
E_{k}\right) \sharp Q_{k})$ and $c_{1}(TZ_{k})$ under the vertical composite
homomorphisms are%
\begin{equation}
\left( 2a+\pi ^{\ast }(c_{1}(TN)+c_{1}(E)),2a_{k}-(1+(-1)^{k})\cdot
z^{\prime }\right) ,  \tag{3.4}  \label{im_1}
\end{equation}%
\begin{equation}
\left( 2a+\pi ^{\ast }(c_{1}(TN)+c_{1}(E)),2x_{k}\right) ,  \tag{3.5}
\label{im_2}
\end{equation}%
respectively, with $a_{k}$, $z^{\prime }$ and $x_{k}$ defined in the proof
of Lemma \ref{ml} and Example \ref{ExM}. Since $\phi _{k}^{\ast }a_{k}=x_{k}+%
\frac{1+(-1)^{k}}{2}\cdot z_{k}$, $\phi _{2}^{\ast }z^{\prime }=z_{2}$ by
the proof of Lemma \ref{ml}, then the horizontal homomorphism $id^{\ast
}\oplus \phi _{k}^{\ast }$ maps the class (\ref{im_1}) to (\ref{im_2}) and
hence $c_{1}(TZ_{k})=\Psi _{k}^{\ast }c_{1}(T\mathbb{P}\left( E_{k}\right) )$
as the vertical homomorphisms in the diagram (\ref{diagram}) are injective.
This completes the proof.
\end{proof}

Now we turn to show Corollary \ref{mc}.

\begin{proof}[Proof of Corollary \protect\ref{mc}]
As the blowup of a K\"{a}hler manifold at a point is also K\"{a}hler\cite[%
Proposition 3.24]{V}, this Corollary follows easily from Theorem \ref{mt}
and the claim that both $E_{k}$ over the projective surfaces $N\sharp N_{k}$
admit holomorphic structures. To prove the claim, it suffices to note
Schwarzenberger \cite[Theorem 9]{S} showed that a complex vector bundle over
a projective surface $S$ admits a holomorphic structure if and only if the
first Chern class of the bundle belongs to $H^{1,1}(S)$. As $%
c_{1}(E_{2})=c_{1}(E)$ and $c_{1}(E_{1})$ is equal to $c_{1}(E)$ plus the
exceptional divisor $-\sigma _{1}^{\ast }$, so $c_{1}(E_{k})\in
H^{1,1}(N\sharp N_{k})$ by the Lefschetz theorem on (1,1) classes\cite[%
Theorem 11.30]{V}. This completes the proof.
\end{proof}

\end{document}